\newtheorem{theorem}{Theorem}[section]
\newtheorem{definition}[theorem]{Definition}
\newtheorem{claim}[theorem]{Claim}
\newtheorem{problem}[theorem]{Problem}
\def\eps{\varepsilon}
\def\dHH#1{\leavevmode\setbox0=\hbox{#1}\dimen0=\wd0\setbox0=\hbox{.}%
	\advance\dimen0 by -\wd0%
	\hbox{#1\raise-0.5ex\hbox to 0pt{\hss.\kern.5\dimen0}}}%
\renewcommand{\eps}{{\varepsilon}}
\newcommand{\M}[1]{{G_\varepsilon}(#1)}
\newcommand{\N}{{\mathbb{N}}}
\newcommand{\Sh}[2]{{\textup{Sh}}_{{#1}}^{{#2}}}
\newcommand{\Gc}{G_c}
\newcommand{\Bnd}{B_{\eps}(n,d)}
\newcommand{\fdl}[2]{f_{#1}^{#2}}
\renewcommand{\L}{L}
\renewcommand{\S}{S}
\begin{document}
	\title[Independent sets in subgraphs of a Shift graph]{Independent sets in subgraphs of a Shift graph}
	
	\author[A. Arman]{Andrii Arman}
	\address{Department of Mathematics, University of Manitoba, Winnipeg, MB R3L0J7, Canada}
	\email{andrii.arman@umanitoba.ca}
	\thanks{}
	
	\author[V. R\"{o}dl]{Vojt\v{e}ch R\"{o}dl}
	\address{Department of Mathematics, 
		Emory University, Atlanta, GA 30322, USA}
	\email{rodl@mathcs.emory.edu}
	\thanks{The second  author was supported by NSF grant DMS 1764385}
	
	\author[M. T. Sales]{Marcelo Tadeu Sales}
	\address{Department of Mathematics, 
		Emory University, Atlanta, GA 30322, USA}
	\email{marcelo.tadeu.sales@emory.edu}
	
	
	\maketitle
	\begin{abstract}
		Erd\H{o}s, Hajnal and Szemer\'{e}di proved that any subset $G$ of vertices of a shift graph $\text{Sh}_{n}^{k}$ has the property that the independence number of the subgraph induced by $G$ satisfies $\alpha(\text{Sh}_{n}^{k}[G])\geq \left(\frac{1}{2}-\varepsilon\right)|G|$, where $\varepsilon\to 0$ as $k\to \infty$. In this note we prove that for $k=2$ and $n \to \infty$ there are graphs $G\subseteq \binom{[n]}{2}$ with $\alpha(\text{Sh}_{n}^{2}[G])\leq \left(\frac{1}{4}+o(1)\right)|G|$, and $\frac{1}{4}$ is best possible. We also consider a related problem for infinite shift graphs.
	\end{abstract}
	
	\section{Introduction}
	For $n>k\in \N$ the shift graph $\Sh{n}{k}$ with $$V(\Sh{n}{k})=\{(x_1,\ldots, x_k) \; : \; 1\leq x_1< \ldots < x_k\leq n\}$$ is a graph in which two vertices ${\bf{x}}=(x_1,\ldots, x_k)$ and ${\bf{y}}=(y_1,\ldots, y_k)$ are adjacent if $x_i=y_{i+1}$ for all $i\in\{1,\ldots, k-1\}$ (or $y_i=x_{i+1}$ for all $i\in\{1,\ldots, k-1\}$). Shift graphs were introduced by Erd\H{o}s and Hajnal~\cite{EH68},\cite{EH} and are standard examples of graphs with large chromatic number and large odd girth. More precisely, while the odd girth of $\Sh{n}{k}$ is $2k+1$, they proved\footnote{In~\cite{EH} authors considered infinite graphs, however their proof can be adapted for finite case (see~\cite{AKRR} and ~\cite{EtH} for more detailed description).} that $\Sh{n}{k}$ has chromatic number $(1+o(1))\log^{(k-1)}n$, where $\log^{(k-1)}$ stands for $k-1$ times iterated $\log_2$. 
	
	Shift graphs have another interesting property: For each finite set $G\subseteq V(\Sh{n}{k})$ the induced subgraph $\Sh{n}{k}[G]$ has a relatively large independent set with respect to $|G|$. In other words, the property ``having a large independent subset'' is hereditary for $\Sh{n}{k}$. Namely, for 
	\begin{equation}\label{eq:alphank}
	\alpha_{n}^{k}=\min\left\{\frac{\alpha(\Sh{n}{k}[G])}{|G|} \;:\; \emptyset\neq G\subseteq V(\Sh{n}{k})\right\},
	\end{equation}
	Erd\H{o}s, Hajnal and Szemer\'{e}di~\cite[Theorem 1]{EHS} proved the following.
	\begin{theorem}[Erd\H{o}s, Hajnal, Szemer\'{e}di]\label{thm:EHS}
		For positive integers $k<n$ 
		$$\alpha_{n}^k\geq \frac{1}{2}-\frac{1}{k}.$$
	\end{theorem}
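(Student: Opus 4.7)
The approach I would take is a reduction to a maximum directed cut problem on an auxiliary DAG, followed by induction on $k$.

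\textbf{Step 1 (reformulation).} Orient each edge of $\Sh{n}{k}$ from $(x_1,\dots,x_k)$ to its up-shift $(x_2,\dots,x_{k+1})$; the resulting oriented graph is a DAG, since each arrow strictly increases the largest coordinate. Given $G \subseteq V(\Sh{n}{k})$, let $D_G$ be the digraph whose vertex set consists of the $(k-1)$-tuples appearing as a prefix $(x_1,\dots,x_{k-1})$ or a suffix $(x_2,\dots,x_k)$ of some $v \in G$, and in which each $v = (x_1,\dots,x_k) \in G$ is represented by the directed edge $(x_1,\dots,x_{k-1}) \to (x_2,\dots,x_k)$. Then $D_G$ is a sub-digraph of the oriented $\Sh{n}{k-1}$, and a direct check shows that $I \subseteq G$ is independent in $\Sh{n}{k}[G]$ iff the corresponding edges of $D_G$ contain no directed path of length two; equivalently, iff there is a partition $V(D_G) = A \sqcup B$ with every edge of $I$ going from $A$ to $B$ (each vertex of $V(D_G)$ is labelled as a tail-only or head-only endpoint for the edges of $I$). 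Thus
$$
\alpha(\Sh{n}{k}[G]) \;=\; \max_{V(D_G) = A \sqcup B} \bigl|\{(u,v) \in E(D_G) : u \in A,\, v \in B\}\bigr|,
$$
and the theorem reduces to showing that every sub-digraph of the oriented $\Sh{n}{k-1}$ has maximum directed cut at least $(1/2 - 1/k)|E(D_G)|$.

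\textbf{Step 2 (induction on $k$).} The base case $k=2$ is the trivial $\alpha_n^2 \ge 0$. For the inductive step, apply the hypothesis to the undirected graph $\Sh{n}{k-1}$ restricted to $V(D_G) \subseteq V(\Sh{n}{k-1})$ to obtain an independent set $I \subseteq V(D_G)$ with $|I| \ge (1/2 - 1/(k-1))|V(D_G)|$. Since $I$ is independent in $\Sh{n}{k-1}$, no edge of $D_G$ has both endpoints in $I$. One then designs a partition $V(D_G) = A \sqcup B$ that uses $I$ cleverly, for instance placing $I$ entirely on one side, splitting the complement via a further application of the hypothesis to $V(D_G) \setminus I$, and averaging with the symmetric choice to produce a directed cut of relative size at least $1/2 - 1/k$.

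\textbf{Main obstacle.} The crux is in the final partitioning step. The vertex-density bound $1/2 - 1/(k-1)$ coming from the inductive hypothesis is \emph{weaker} than the edge-cut density $1/2 - 1/k$ that needs to be achieved, so it cannot be substituted directly. The improvement must come from the max-directed-cut reformulation itself, which converts a lower bound on the fraction of vertices in an independent set into a larger lower bound on the fraction of edges in the cut when the edges of $D_G$ concentrate near $I$, combined with controlling the edges internal to $V(D_G)\setminus I$ by applying the inductive hypothesis once more. Balancing the contributions of edges incident to $I$ against edges internal to $V(D_G)\setminus I$ so that they combine to exactly $(1/2 - 1/k)|E(D_G)|$ is the delicate quantitative heart of the proof, and the source of the specific constant in the bound.
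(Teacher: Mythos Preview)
First, note that the paper does not contain a proof of Theorem~\ref{thm:EHS}; it is quoted from the Erd\H{o}s--Hajnal--Szemer\'edi paper~\cite{EHS}. So there is no ``paper's own proof'' to compare against, and your proposal must stand on its own.

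Your Step~1 is correct and is a clean reformulation: for $G\subseteq V(\Sh{n}{k})$ the independence number $\alpha(\Sh{n}{k}[G])$ equals the maximum directed cut of the auxiliary digraph $D_G\subseteq \Sh{n}{k-1}$, and $|G|=|E(D_G)|$. This is exactly the observation behind Claim~\ref{claim:lb} for $k=2$, where the directed cut is obtained by a random $2$-colouring of $[n]$.

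Step~2, however, is not a proof. You set up an induction on $k$, invoke the hypothesis to obtain an independent set $I\subseteq V(D_G)$ of relative size at least $\tfrac12-\tfrac{1}{k-1}$, observe that $D_G$ has no edges inside $I$, and then stop. The remaining paragraph (``Main obstacle'') explicitly names the missing step and describes, in qualitative terms only, what would need to happen; no partition is constructed and no inequality is proved. As written this is an outline of a hoped-for argument, not an argument.

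There is also a structural reason to doubt that this particular induction can be completed. The hypothesis controls only $|I|/|V(D_G)|$, while the goal is a bound on $\text{maxdicut}(D_G)/|E(D_G)|$. The digraph $D_G$ is an arbitrary subgraph of the oriented $\Sh{n}{k-1}$, so its edges may be distributed however one likes relative to $I$: in particular, almost all edges of $D_G$ could lie inside $J=V(D_G)\setminus I$, in which case knowing that $I$ is large says essentially nothing about the directed cut. Your suggestion to ``apply the hypothesis once more to $V(D_G)\setminus I$'' again yields only a vertex bound, and iterating produces a sequence of vertex sets whose sizes you control but whose incident edge counts you do not. Without an additional mechanism that ties edge counts to the vertex partition, the numerology cannot close. (Contrast this with the $k=2$ argument in Claim~\ref{claim:lb}, where one colours the ground set $[n]$ rather than the vertex set of an auxiliary graph; each edge then receives an independent random type, which is exactly what gives control over edge counts.)
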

	As for the upper bound, for $n\geq 2k+1 $ the shift graph $\Sh{n}{k}$ contains an odd cycle and so $\alpha_{n}^{k}<1/2$. Therefore, Theorem~\ref{thm:EHS} yields a lower bound which for large values of $k$ is essentially optimal.
	
	Nevertheless, determining the values of $\alpha_{n}^{k}$ for fixed $k$ and large $n$ seems to represent an interesting and non-trivial problem. We will concentrate our attention on the case $k=2$. In this case the bound from Theorem~\ref{thm:EHS} is not optimal, as we observe that $\alpha_{n}^2\geq 1/4$ for all $n$, and prove a matching upper bound.
	\begin{theorem}\label{thm:main}
		$\displaystyle \lim_{n\to\infty} \alpha_{n}^{2}=\frac{1}{4}.$
	\end{theorem}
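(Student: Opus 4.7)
The plan is to establish matching lower and upper bounds. The lower bound $\alpha_n^2 \ge 1/4$ follows from a uniformly random bipartition argument: for any non-empty $G \subseteq \binom{[n]}{2}$, pick a uniform $[n] = A \sqcup B$. Each edge $\{i,j\} \in G$ (with $i<j$) is \emph{forward-crossing}, i.e.\ satisfies $i \in A$ and $j \in B$, with probability $1/4$, so some bipartition produces at least $|G|/4$ such edges, and this collection is an independent set of $\Sh{n}{2}[G]$ (no two forward-crossing edges share a shift vertex).

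For the upper bound I would exploit the bijective description $\alpha(\Sh{n}{2}[G]) = \max_\chi f_+(\chi)$, where $\chi \colon [n] \to \{0,1\}$ ranges over bipartitions and $f_+(\chi)$ counts forward-crossing edges. Setting $y_i = 2\chi_i - 1 \in \{\pm 1\}$ and expanding $(1-\chi_i)\chi_j = \tfrac14(1-y_i+y_j-y_iy_j)$, one obtains, via $\sum_{\{i,j\}\in G,\,i<j}y_i = \sum_v y_v d^+_G(v)$, $\sum_{\{i,j\}\in G,\,i<j}y_j = \sum_v y_v d^-_G(v)$, and $\sum_{\{i,j\}\in G}y_iy_j = |G| - 2\,\mathrm{cut}_y(G)$, the clean identity
\[ f_+(\chi) \;=\; \tfrac14\langle y,\, d^-_G - d^+_G\rangle \;+\; \tfrac12\,\mathrm{cut}_y(G), \]
and hence
\[ \alpha(\Sh{n}{2}[G]) \;\le\; \tfrac14\|d^-_G - d^+_G\|_1 \;+\; \tfrac12\,\mathrm{MAXCUT}(G). \]
It therefore suffices to construct, for each large $n$, a graph $G = G_n$ satisfying (i) $\|d^-_G - d^+_G\|_1 = o(|G|)$, and (ii) $\mathrm{MAXCUT}(G) \le (\tfrac12 + o(1))|G|$.

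Condition (ii) is a standard quasi-randomness property, satisfied with high probability by, say, a random $d$-regular graph with $d=d(n)\to\infty$ slowly. The delicate condition is (i): for a plain $G(n,p)$ one has $d^+_G(v)\approx p(n-v)$ and $d^-_G(v)\approx p(v-1)$, so $\|d^+_G-d^-_G\|_1$ is of order $|G|$, wrecking the bound. To kill this order-induced bias I would try a \emph{reflective symmetrisation}: pair each unordered pair $\{i,j\}$ with its mirror $\{n+1-j,\,n+1-i\}$ and decide at random which of the two lies in $G$, then take the union of many independent such samples to achieve the density needed for (ii) while bringing the per-vertex imbalance to $o(1)$ on average.

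The principal obstacle will be to enforce (i) and (ii) simultaneously: any symmetrisation that flattens the systematic forward/backward degree imbalance can easily introduce structural correlations which destroy the quasi-randomness required for (ii), while conversely a generic random construction has strong order-aligned imbalance. The heart of the argument will therefore be a careful random construction, together with a concentration/union-bound argument over all $2^n$ bipartitions $\chi$ certifying (i) and (ii) simultaneously with high probability, from which the bound $\alpha(\Sh{n}{2}[G_n]) \le (\tfrac14+o(1))|G_n|$ follows.
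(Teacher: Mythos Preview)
Your lower bound argument is correct and essentially identical to the paper's.

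For the upper bound, the identity
\[
f_+(\chi)\;=\;\tfrac14\langle y,\,d^-_G-d^+_G\rangle\;+\;\tfrac12\,\mathrm{cut}_y(G)
\]
is correct, and the resulting bound $\alpha(\Sh{n}{2}[G])\le\tfrac14\|d^-_G-d^+_G\|_1+\tfrac12\,\mathrm{MAXCUT}(G)$ is valid. The problem is that your two target conditions (i) and (ii) appear to be genuinely incompatible, not merely technically delicate, and neither of your proposed constructions satisfies (i).

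Concretely, set $f(k)=e_G([k],[k{+}1,n])$; then $f(k)-f(k{-}1)=d^+_G(k)-d^-_G(k)$, so $f(k)\le\|d^-_G-d^+_G\|_1$ for every $k$. Thus (i) forces every \emph{prefix cut} to be $o(|G|)$. But a random $d$-regular graph with $d\to\infty$ is w.h.p.\ an expander, and the expander mixing lemma gives $e_G([n/2],[n/2{+}1,n])=(\tfrac12+o(1))|G|$ in \emph{any} vertex ordering; so (i) fails irreparably for precisely the graphs that most naturally satisfy (ii). Your reflective symmetrisation does not touch this: it leaves $\deg(1)$ and $\deg(n)$ large, and already $|d^-_G(1)-d^+_G(1)|=\deg(1)$. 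Conversely, the natural candidates satisfying (i) are band-type graphs (edges of length $\le w=o(n)$), but there the host graph $P_n^w$ has $\mathrm{MAXCUT}(P_n^w)\ge(2-\sqrt2+o(1))|P_n^w|\approx 0.586\,|P_n^w|$ for large $w$ (take alternating blocks of length $w/\sqrt2$), and a random subgraph inherits this, so (ii) fails. Separating the two maxima in your bound seems to discard exactly the interaction needed to reach $\tfrac14$.

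The paper proceeds very differently. It never decouples the linear and cut terms; instead it constructs $G_\eps(n,d)$ recursively as two copies of $G_\eps(n/2,d{-}1)$ on $S=[n/2]$ and $L=[n]\setminus S$, joined by a sparse quasi-random bipartite graph, and bounds $\max_c|G_c|/|G|$ directly by induction on $d$, carrying the blue-density $\alpha$ as a parameter and solving a one-variable quadratic optimisation at each step. The bound obtained is $\tfrac14+O((\ln d)/d)$, and letting $d\to\infty$ gives the result.
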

	In~\cite{CEH}, Czipszer, Erd\H{o}s and Hajnal proved that the densest independent set of the infinite graph $\Sh{\N}{2}$ has density $1/4$ (see Section~\ref{sec:infinite} for precise formulation). We complement their result by showing that the infinite shift graph $\Sh{\N}{2}$ does not have a similar hereditary property, i.e., there exists $G\subseteq V(\Sh{\N}{2})$ such that any independent set in $\Sh{\N}{2}[G]$ has density zero in $G$ (see Theorem~\ref{thm:tree}).	
	
	\section{Proof of Theorem~\ref{thm:main}} 
	Note that $\alpha^{2}_n=\min\left\{\frac{\alpha(\Sh{n}{2}[G])}{|G|}\;:\; \emptyset\neq G\subseteq V(\Sh{n}{2})\right\}$ is a nonincreasing positive sequence, so the sequence $\{\alpha^{2}_n\}$ has a limit. Additionally, we will often view $G\subseteq V(\Sh{n}{2})$ as a graph with $V(G)=[n]$ and set of edges equal to $G$. Subsequently $|G|$ will denote both a size of $G$ as a subset of $V(\Sh{n}{2})$, and the number of edges in $G$ when it is viewed as a graph.
	\subsection{Lower bound} 
	We first show that the value of the limit in Theorem~\ref{thm:main} is at least $1/4$.
	\begin{claim}\label{claim:lb}
		For every set $G\subseteq V(\Sh{n}{2})$ we have $\alpha(\Sh{n}{2}[G])\geq \frac{1}{4}|G|$. 
	\end{claim}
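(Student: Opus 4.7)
The plan is to first recast independent sets in $\Sh{n}{2}[G]$ in purely graph-theoretic terms, and then apply a random bipartition argument.

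View each element of $G$ as a directed edge $(a,b)$ with $a<b$, so $G$ becomes an acyclic orientation of a graph on vertex set $[n]$. By definition, two vertices $(a,b)$ and $(c,d)$ are adjacent in $\Sh{n}{2}$ exactly when the head of one equals the tail of the other, i.e.\ when they form a directed path of length $2$. Hence an independent set $I \subseteq G$ is a set of directed edges in which no vertex of $[n]$ is simultaneously a head of some edge of $I$ and a tail of another edge of $I$. Formally, I would establish the following characterization: $I \subseteq G$ is independent in $\Sh{n}{2}[G]$ if and only if there exists a partition $[n] = U \cup D$ such that every edge $(a,b)\in I$ satisfies $a\in U$ and $b\in D$.

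The forward direction is the content of the preceding paragraph (just place each vertex in $U$ if it appears as a tail in $I$, and in $D$ otherwise). The converse is immediate: if $a\in U$, $b\in D$ for every $(a,b)\in I$, then a shared endpoint of two edges of $I$ would need to lie in both $U$ and $D$, contradicting the partition. This characterization is the only non-mechanical step, and is the main point where one must be careful.

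Once the characterization is in place, the lower bound follows from a one-line probabilistic argument. Choose a random partition of $[n]$ by independently placing each vertex in $U$ or $D$ with probability $1/2$. For any edge $(a,b)\in G$ with $a<b$, the probability that $a\in U$ and $b\in D$ is exactly $1/4$. By linearity of expectation, the expected number of edges of $G$ with $a\in U$ and $b\in D$ equals $|G|/4$, so some partition achieves at least this many. By the characterization, this set of edges is independent in $\Sh{n}{2}[G]$, giving $\alpha(\Sh{n}{2}[G]) \geq |G|/4$ as claimed.
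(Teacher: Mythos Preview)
Your proof is correct and follows essentially the same approach as the paper: a random $2$-coloring (your partition $U\cup D$) together with the observation that the set of edges going from one color class to the other is always independent in $\Sh{n}{2}$, with expected size $|G|/4$. The only difference is cosmetic: you state and prove the full two-sided characterization of independent sets, whereas the paper uses (without spelling it out) only the easy direction needed here---that $G_c$ is independent---and defers the other direction to the upper-bound argument.
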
 
	\begin{proof}
		Let $G\subseteq V(\Sh{n}{2})$ be given. Consider a random colouring $c:[n]\to \{r,b\}$ such that every $i\in[n]$ is coloured red/blue with probability $1/2$ independently of other elements of $[n]$. 
		
		Let $\Gc$ be a random subset of $G$ defined by 
		\begin{equation*}
		\Gc=\{(i,j)\in G \; : \; i<j,\; c(i)=b,\; c(j)=r\}.
		\end{equation*}
		Then such $\Gc$ is always an independent set in $\Sh{n}{2}$. Moreover, $\mathbb{P}(e\in \Gc)=\frac{1}{4}$ for every $e\in G$, and so $\mathbb{E}(|\Gc|)=\frac{1}{4}|G|$. Therefore $\alpha(\Sh{n}{2}[G])\geq \frac{1}{4}|G|$.
	\end{proof}
	
	\subsection{Upper bound} We now proceed and prove the upper bound
	\begin{equation}\label{ineq:ub}
	\lim_{n\to\infty}\alpha^{2}_n\leq \frac{1}{4}.
	\end{equation}

	In what follows for every $\eps>0$, integer $d$ satisfying $\frac{3+\ln d}{4d}\leq \frac{\eps}{2}$, and for every integer $n\geq n_0(\eps,d)$ that is a multiple of $2^d$, we will construct a graph $\M{n,d}\subseteq V(\Sh{n}{2})$ with 
	\begin{equation*}
	\alpha(\Sh{n}{2}[\M{n,d}])\leq \left( \frac{1}{4}+\eps\right)|\M{n,d}|.
	\end{equation*}
	To be more precise, for such $\eps$ and $d$ we inductively build $\M{n,d}$ satisfying
	\begin{equation}\label{eq:ub}
	\frac{\alpha(\Sh{n}{2}[\M{n,d}])}{|\M{n,d}|}\leq \left( \frac{1}{4}+\frac{3+\ln d}{4d}+\frac{\eps}{2}\right).
	\end{equation}
	Since $\{\alpha_{n}^{2}\}$ is nonincreasing, (\ref{eq:ub}) implies that $\lim_{n\to \infty }\alpha_{n}^{2}\leq 1/4+\eps$, which subsequently implies (\ref{ineq:ub}) by letting $\eps\to 0$.
	
	While constructing $\M{n,d}$ we will use random bipartite graphs. Recall that if $G$ is a graph and $X, Y\subseteq V(G)$ then $G[X,Y]$ is a graph consisting of edges of $G$ with one vertex in $X$ and another in $Y$. Finally let $e_G(X,Y)=|E(G[X,Y])|$ and we will omit subscript when $G$ is obvious from the context. The following claim can be easily verified by considering a random graph and so the proof of Claim~\ref{claim:bip} is postponed to Appendix.
	
	\begin{claim}\label{claim:bip}
		For $\eps>0$ and $d \in \N$ there is $n_0=n_0(\eps,d)$ such that for all $n\geq n_0$ that are divisible by $2^d$ the following holds. Let $[n]=\S\cup \L$, where $\S=\{1,\ldots, \frac{n}{2}\}$ and $\L=[n]\setminus S$. There exists a bipartite graph $\Bnd$ with bipartition $V(\Bnd)=\S\sqcup \L$ such that
		\begin{itemize}
			\item[(i)]  $|\Bnd|=\frac{n^2}{2^{d+1}}.$
			\item[(ii)] for all $X\subseteq \S$ and $Y\subseteq \L$
			$$e(X,Y)=\frac{1}{2^{d-1}}|X||Y|\pm \frac{\eps n^2}{2^{d+2}}.$$
		\end{itemize} 
	\end{claim}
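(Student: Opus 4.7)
The plan is to use a standard probabilistic argument: construct $B_{\eps}(n,d)$ by a random process, show the pseudorandomness condition (ii) with high probability by Chernoff/Hoeffding and a union bound, and then adjust the edge count to hit exactly $n^{2}/2^{d+1}$.

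First, set $p = 1/2^{d-1}$ and form a random bipartite graph $B$ with parts $S, L$ by including each of the $|S|\cdot|L| = n^{2}/4$ potential edges independently with probability $p$. Then $\mathbb{E}|B| = p \cdot n^{2}/4 = n^{2}/2^{d+1}$, and for any fixed $X \subseteq S$, $Y \subseteq L$, $\mathbb{E}\,e(X,Y) = p|X||Y| = |X||Y|/2^{d-1}$. Since $e(X,Y)$ is a sum of $|X||Y|$ independent $\{0,1\}$ random variables, Hoeffding's inequality yields
\[
\Pr\Bigl[\bigl|e(X,Y) - p|X||Y|\bigr| > \tfrac{\eps n^{2}}{2^{d+3}}\Bigr] \leq 2\exp\!\Bigl(-\tfrac{2(\eps n^{2}/2^{d+3})^{2}}{|X||Y|}\Bigr) \leq 2\exp\!\Bigl(-\tfrac{\eps^{2}n^{2}}{2^{2d+3}}\Bigr),
\]
using $|X||Y| \leq n^{2}/4$.

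Second, union bound over all pairs $(X,Y)$: there are at most $2^{|S|}\cdot 2^{|L|} = 2^{n}$ of them, so the probability that (ii), with $\eps/2$ in place of $\eps$, fails for some pair is at most $2^{n+1}\exp(-\eps^{2}n^{2}/2^{2d+3})$, which tends to $0$ once $n \geq n_{0}(\eps,d)$ (any $n$ with $\eps^{2}n/2^{2d+3} > 2\ln 2$ suffices). Fix such an outcome $B$; it satisfies the quantitative bound in (ii) with slack $\eps n^{2}/2^{d+3}$.

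Third, apply (ii) to $X=S$, $Y=L$ to obtain $\bigl||B| - n^{2}/2^{d+1}\bigr| \leq \eps n^{2}/2^{d+3}$. Arbitrarily add or delete at most $\eps n^{2}/2^{d+3}$ edges of $B$ to produce a graph $B_{\eps}(n,d)$ with exactly $n^{2}/2^{d+1}$ edges (the target is an integer since $2^{d}\mid n$). This correction alters $e(X,Y)$ by at most $\eps n^{2}/2^{d+3}$ for every pair, so the total deviation is at most $\eps n^{2}/2^{d+3} + \eps n^{2}/2^{d+3} = \eps n^{2}/2^{d+2}$, giving (ii) as stated, while (i) holds by construction.

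There is no serious obstacle; the only point requiring a little care is that the Hoeffding bound must beat the $2^{n}$ union bound, which forces the deviation parameter to be $\Theta(n^{2})$ rather than, say, $\Theta(n^{3/2})$, and determines the threshold $n_{0}(\eps,d)$. The final rounding of the edge count is absorbed by starting with half of the allowed slack.
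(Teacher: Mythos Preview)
Your argument is correct and follows the same probabilistic template as the paper's proof (random bipartite graph, concentration inequality, union bound over all $2^n$ pairs $(X,Y)$). The one genuine difference is the random model: the paper samples a uniformly random subset of exactly $n^2/2^{d+1}$ edges from $K_{S,L}$, so that $e(X,Y)$ is hypergeometric and property~(i) holds automatically, whereas you use the binomial model $G(S,L,p)$ with $p=2^{-(d-1)}$ and then repair the edge count afterwards. Your repair step is handled cleanly by reserving half the slack ($\eps n^2/2^{d+3}$ in place of $\eps n^2/2^{d+2}$) before the union bound; the paper avoids this step entirely at the cost of invoking a concentration inequality for hypergeometric random variables rather than the more elementary Hoeffding bound for independent Bernoullis. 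Neither approach has any real advantage over the other here.
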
\vspace{5pt}
	
	{\bf Construction of $\M{n,d}$.}

	\begin{definition}\label{def:M}
		For every even $n$ let $\M{n,1}$ be such that 
		$$\M{n,1}=\{(i,j) \;:\; 1\leq i\leq \frac{n}{2}<j\leq n\},$$
		i.e., $\M{n,1}$ is a complete balanced bipartite graph.
		
		For $d\in \N$ define graph $\M{n,d}$ recursively for all sufficiently large\footnote{$n\geq 2^in_0(\eps,d-i)$ for all $i\in\{0,1,\ldots, d-2\}$, where $n_0(\eps,d-i)$ is the number provided by Claim~\ref{claim:bip}.} $n$ such that $2^d|n$. Let $[n]=\S\cup \L$, where $\S=\{1, \ldots, \frac{n}{2}\}$ and $\L=[n]\setminus \S$. Then define
		$$\M{n,d}=\M{\S,d-1}\cup\M{\L,d-1}\cup\Bnd,$$
		where $\M{\S,d-1}=\M{\frac{n}{2},d-1}$, $V(\M{\L,d-1})=L$ and $\M{\L,d-1}\cong \M{\frac{n}{2},d-1}$, and $\Bnd$ is a graph guaranteed by Claim~\ref{claim:bip}.
	\end{definition}
	To summarize, every $\M{n,d}=G$ satisfies the following properties (for $\S_n=\{1,\ldots, \frac{n}{2}\}$ and $\L_n=[n]\setminus \S_n$):
	\begin{itemize}
		\item[(i)]  $\displaystyle e_{G}(\S_n, \L_n)=\frac{n^2}{2^{d+1}}.$
		\item[(ii)] for all $X\subseteq \S_n$ and $Y\subseteq \L_n$
		$$e(X,Y)=\frac{1}{2^{d-1}}|X||Y|\pm \frac{\eps n^2}{2^{d+2}}.$$
		\item[(iii)] $G[\S_n]\cong G[\L_n]=\M{\frac{n}{2},d-1}$
	\end{itemize}
	
	Using properties (i) and (iii) and induction on $d$ it is easy to verify that for all $d\in \N$ and $n$ divisible by $2^d$ 
	\begin{equation}\label{eq:|E(G)|}
	|\M{n,d}|=d\frac{n^2}{2^{d+1}}.
	\end{equation}		
	
	We will now proceed with proving~(\ref{eq:ub}). First let $G\subseteq V(\Sh{n}{2})$ and let $I\subseteq G$ be an independent set in $\Sh{n}{2}$. In other words there is no $1\leq i<j<k\leq n$ with both $(i,j)$ and $(j,k)$ in $I$. One can observe that for each such $I\subseteq G$ there exists a 2-colouring $c: [n] \to \{r, b\}$ with   $c(i)=r$ and $c(j)=b$ whenever $(i,j)\in I$, and then
	\begin{equation}\label{eq:\Gc}
	I\subseteq \Gc=\{(x,y)\in G \;:\; x<y,\; c(x)=b,\; c(y)=r\}.
	\end{equation}
	Therefore, in order to prove~(\ref{eq:ub}) we will show that for $G=\M{n,d}$ and any $c:[n]\to \{r, b\}$
	\begin{equation}\label{eq:Gc}
	\frac{|\Gc|}{|G|}\leq \frac{1}{4}+\frac{3+\ln d}{4 d}+\frac{\eps}{2}.
	\end{equation}
	
	For the rest of our calculation let $\eps$ be fixed. We will now prove~(\ref{eq:Gc})	by induction on $d$. In order to make use of recursive structure of $\M{n,d}$ we will prove a version of~(\ref{eq:Gc}) with an additional assumption that $|\{i \;:\; c(i)=b\}|=\alpha n$. 
	
	To that end for $d\in \N$, $\alpha\in [0,1]$ and $n\geq n_0(\eps, d)$ let 
	\begin{equation}\label{def:f_d}
	\fdl{d}{\alpha}(n)=d\cdot \max_{c} \left\{ \frac{|\Gc|}{|G|} \; : \; G=\M{n,d},\; |\{i \;:\; c(i)=b\}|=\alpha n \right\}.
	\end{equation}
	
	We will prove the following estimate on $\fdl{d}{\alpha}(n)$.
	\begin{claim}\label{claim:main}
		For every $d\in \N$, $\alpha\in [0,1]$ and $n\geq n_0(\eps, d)$
		$$\fdl{d}{\alpha}(n)\leq (d+3)(\alpha-\alpha^2)+\frac{1}{4}\ln d+\frac{d\eps}{2}.$$
	\end{claim}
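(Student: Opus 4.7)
The plan is induction on $d$, exploiting the recursive structure of $\M{n,d}$. For the base case $d=1$, the graph $\M{n,1}$ is the complete bipartite graph between $\S_n$ and $\L_n$, so if $\alpha_1,\alpha_2$ denote the blue densities in $\S_n,\L_n$ (satisfying $\alpha_1+\alpha_2 = 2\alpha$), then $|\Gc|/|\M{n,1}| = \alpha_1(1-\alpha_2)$, whose maximum over the feasible region is easily seen to equal $2\min(\alpha,1-\alpha)\le 4\alpha(1-\alpha)$, matching the claim since $\tfrac{1}{4}\ln 1=0$.

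For the inductive step I would split $\Gc$ according to the decomposition $\M{n,d}=\M{\S_n,d-1}\cup\M{\L_n,d-1}\cup\Bnd$. Letting $\alpha_1,\alpha_2$ denote the blue fractions in $\S_n,\L_n$, the inductive hypothesis applied to $\M{\S_n,d-1}\cong\M{\L_n,d-1}\cong\M{n/2,d-1}$ bounds the contribution from each half to $|\Gc|$ by $\tfrac{\fdl{d-1}{\alpha_i}(n/2)}{d-1}\cdot|\M{n/2,d-1}|$. For the bipartite part, I would apply property (ii) with $X$ the blue vertices of $\S_n$ and $Y$ the red vertices of $\L_n$, producing at most $\alpha_1(1-\alpha_2)n^2/2^{d+1}+\eps n^2/2^{d+2}$ edges. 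Normalizing by $|\M{n,d}|=dn^2/2^{d+1}$ and multiplying by $d$ yields the recurrence
$$\fdl{d}{\alpha}(n) \le \tfrac{1}{2}\bigl[\fdl{d-1}{\alpha_1}(n/2)+\fdl{d-1}{\alpha_2}(n/2)\bigr] + \alpha_1(1-\alpha_2) + \tfrac{\eps}{2},$$
where the adversary is free to choose $\alpha_1,\alpha_2\in[0,1]$ with $\alpha_1+\alpha_2=2\alpha$.

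Substituting the inductive hypothesis and using $\alpha_1^2+\alpha_2^2 = 4\alpha^2-2\alpha_1\alpha_2$, the right-hand side collapses to $(d+2)(\alpha-2\alpha^2) + (d+1)\alpha_1(2\alpha-\alpha_1) + \alpha_1 + \tfrac{1}{4}\ln(d-1)+\tfrac{d\eps}{2}$, a concave quadratic in $\alpha_1$ whose vertex sits at $\alpha_1^\ast=\alpha+\tfrac{1}{2(d+1)}$. When this vertex is feasible --- i.e.\ when $\alpha\in[\tfrac{1}{2(d+1)},\,1-\tfrac{1}{2(d+1)}]$ --- substituting gives exactly $(d+3)(\alpha-\alpha^2)+\tfrac{1}{4(d+1)}+\tfrac{1}{4}\ln(d-1)+\tfrac{d\eps}{2}$. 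Closing the induction then reduces to the numerical inequality $\tfrac{1}{4(d+1)}+\tfrac{1}{4}\ln(d-1)\le\tfrac{1}{4}\ln d$, equivalently $\ln(1+\tfrac{1}{d-1})\ge\tfrac{1}{d+1}$, which follows from $\ln(1+x)\ge\tfrac{x}{1+x}$ applied at $x=\tfrac{1}{d-1}$.

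The conceptual heart of the argument is that the adversary's best coloring is \emph{not} balanced between the two halves: it places slightly more blue in $\S_n$ than in $\L_n$, netting a surplus of exactly $\tfrac{1}{4(d+1)}$ per recursive level, and it is essential that these per-level surpluses telescope into the $\tfrac{1}{4}\ln d$ term rather than summing to a genuine constant that would overwhelm the target bound. The main remaining obstacle is the boundary regime $\alpha\notin[\tfrac{1}{2(d+1)},\,1-\tfrac{1}{2(d+1)}]$, where $\alpha_1^\ast$ leaves the feasible region and the maximum is attained at an endpoint ($\alpha_1=2\alpha$, $\alpha_1=2\alpha-1$, or $\alpha_1=1$); direct computation shows that at each such endpoint the quadratic still evaluates to at most $(d+3)(\alpha-\alpha^2)+\tfrac{1}{4(d+1)}$, with equality exactly at the transition value of $\alpha$, closing out the induction.
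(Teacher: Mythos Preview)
Your proposal is correct and follows essentially the same induction as the paper: same base case, same recursive decomposition of $\Gc$, same quadratic in the blue-density parameter, and the same per-level surplus $\tfrac{1}{4(d+1)}$. Two minor differences are worth noting. First, the paper maximizes the quadratic over all $x\in\mathbb{R}$ rather than over the feasible interval; since the unconstrained maximum is automatically an upper bound for the constrained one, this sidesteps your boundary-case analysis entirely. Second, the paper carries the auxiliary bound with $\tfrac{1}{4}\sum_{i=3}^{d+1}\tfrac{1}{i}$ in place of $\tfrac{1}{4}\ln d$, so the induction closes by exact telescoping and no logarithm inequality is needed.
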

	From (\ref{def:f_d}) it follows that for $G=\M{n,d}$ and any colouring $c$ we have 
	$$\frac{|\Gc|}{|G|}\leq \max_{\alpha\in[0,1]}\frac{\fdl{d}{\alpha}(n)}{d}.$$ Then by Claim~\ref{claim:main} we get 
	$$\frac{|\Gc|}{|G|}\leq \frac{1}{4}\frac{d+3}{d}+\frac{\ln d}{4d}+\frac{\eps}{2},$$
	establishing~(\ref{eq:Gc}) and (\ref{eq:ub}). Hence it remains to prove Claim~\ref{claim:main} in order to finish the proof of the upper bound.
	\begin{proof}[Proof of Claim~\ref{claim:main}]
		We prove a slightly stronger inequality for all $n\geq n_0(\eps, d)$
		\begin{equation}\label{ineq:fdl}
		\fdl{d}{\alpha}(n)\leq (d+3)(\alpha-\alpha^2)+\frac{1}{4}\sum_{i=3}^{d+1}\frac{1}{i}+\frac{d\eps}{2}.	
		\end{equation}
		The proof is by induction on $d$. For $d=1$ recall that $G=\M{n,1}$ is a complete bipartite graph between $\S_n$ and $\L_n$. Let $c:[n]\to \{r,b\}$ be such that for $B=\{i \;:\; c(i)=b\}$ we have $|B|=\alpha n$. Then in view of~(\ref{eq:\Gc}) the maximum value of $|\Gc|$ is achieved when $B=[\alpha n]$ and so
		$$\fdl{1}{\alpha}(n)=
		\begin{cases}
		2\alpha, \quad \alpha\in [0,\frac{1}{2}]\\
		2-2\alpha, \quad \alpha\in [\frac{1}{2},1].
		\end{cases}$$ 
		Now it is easy to verify that $\fdl{1}{\alpha}\leq 4(\alpha-\alpha^2)$ for all $\alpha \in [0,1]$, establishing~(\ref{ineq:fdl}) in the case $d=1$. 
		
		To prove inductive step let $G=\M{n,d}$ and let $c:[n]\to \{r,b\}$ be such that for $B=\{i \;:\; c(i)=b\}$ we have $|B|=\alpha n$. As before, let $\S=\{1, \ldots, \frac{n}{2}\}$ and $\L=[n]\setminus \S$. Let $B_\S$, $B_\L$, $R_\S$ and $R_{\L}$ denote the set of blue and red vertices in $\S$ and $\L$ respectively. We will further refine our analysis by assuming that $|B_\S|=x\frac{n}{2}$ with some $x\in [0,2\alpha]$. Since $|B_S|+|R_S|=\frac{n}{2}$, $|B_\S|+|B_\L|=|B|=\alpha n$, and  $|B_\L|+|R_\L|=\frac{n}{2}$, we have $|R_S|=(1-x)\frac{n}{2}$, $|B_\L|=(2\alpha-x)\frac{n}{2}$, and consequently $|R_\L|=(1-2\alpha+x)\frac{n}{2}$ (see Figure~\ref{fig:1}). Then
		\begin{equation}\label{eq:EGC}
		|\Gc|=|\Gc[\S]|+|\Gc[\L]|+e_{G}(B_\S, R_\L).
		\end{equation}
		\begin{figure}[H]
			\begin{center}
				\begin{tikzpicture}[line cap=round,line join=round,>=triangle 45,x=1.0cm,y=1.0cm, scale=1]
				\clip(-5.5,-2.6) rectangle (5.5,2);
				\begin{scope}[xshift=-1.5cm]
				\def\X{1};
				\def\Y{2};
				\def\A{-30}
				\filldraw[color=blue, opacity=0.5]
				({cos(180-\A)*\X}, {sin(180-\A)*\Y}) arc (180-\A:360+\A:{\X} and {\Y})--cycle; 
				\filldraw[color=red, opacity=0.5]
				({cos(\A)*\X}, {sin(\A)*\Y}) arc (\A:180-\A:{\X} and {\Y}) -- cycle;
				\draw[color=black] ({0},{(-1+sin(\A))*0.5*\Y}) circle (0pt) node {$x$};
				\draw[color=black] ({0},{(1+sin(\A))*0.5*\Y}) circle (0pt) node {$1-x$};
				\draw[color=black] ({0},{-1.2*\Y}) circle (0pt) node {$\Gc[S]$};
				\draw[color=black] ({-1.5-\X},{0}) circle (0pt) node {$G=\M{n,d}:$};
				\end{scope}
				
				\begin{scope}[xshift=1.5cm]
				\def\X{1};
				\def\Y{2};
				\def\A{-10}
				\filldraw[color=blue, opacity=0.5]
				({cos(180-\A)*\X}, {sin(180-\A)*\Y}) arc (180-\A:360+\A:{\X} and {\Y})--cycle; 
				\filldraw[color=red, opacity=0.5]
				({cos(\A)*\X}, {sin(\A)*\Y}) arc (\A:180-\A:{\X} and {\Y}) -- cycle;
				\draw[color=black] ({0},{(-1+sin(\A))*0.5*\Y}) circle (0pt) node {$2\alpha-x$};
				\draw[color=black] ({0},{(1+sin(\A))*0.2*\Y}) circle (0pt) node {$1-2\alpha+x$};
				\draw[color=black] ({0},{-1.2*\Y}) circle (0pt) node {$\Gc[L]$};
				\end{scope}
				
				\end{tikzpicture}
			\end{center}
			\caption{Proportions of red and blue vertices in $\Gc[S]$ and $\Gc[L]$.}\label{fig:1}
		\end{figure}
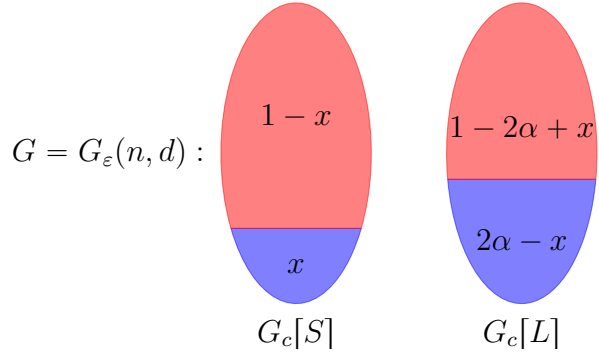
		
		Now, by (iii) $G[\S]=\M{\frac{n}{2},d-1}$ and we assumed $|B_{\S}|=x\frac{n}{2}$, so 
		\begin{equation}\label{eq:EGC2}
		|\Gc[\S]|\stackrel{(\ref{def:f_d})}{\leq} \frac{\fdl{d-1}{x}(\frac{n}{2})}{d-1}|G[\S]|\stackrel{(\ref{eq:|E(G)|})}{=}\frac{n^2}{2^{d+2}}\fdl{d-1}{x}\left(\frac{n}{2}\right).
		\end{equation}
		Similarly, since $|B_{\L}|=(2\alpha-x)\frac{n}{2}$ we have
		\begin{equation}\label{eq:EGC3}
		|\Gc[\L]|\leq \frac{n^2}{2^{d+2}}\fdl{d-1}{2\alpha -x}\left(\frac{n}{2}\right).
		\end{equation}
		And finally, since $G=\M{n,d}$,
		\begin{equation}\label{eq:EGC1}
		e_{G}(B_\S,R_\L) \stackrel{(ii)}{\leq} \frac{1}{2^{d-1}}|B_\S||R_\L|+\frac{\eps n^2}{2^{d+2}}=\frac{n^2}{2^{d+1}}\left(x(1-2\alpha+x)+\frac{\eps}{2}\right).
		\end{equation} 
		Combining (\ref{eq:EGC}) with (\ref{eq:EGC2}), (\ref{eq:EGC3}), and (\ref{eq:EGC1}) we obtain
		$$|\Gc|\leq \frac{n^2}{2^{d+1}}\left(\frac{1}{2}\left(\fdl{d-1}{x}\left(\frac{n}{2}\right)+\fdl{d-1}{2\alpha-x}\left(\frac{n}{2}\right)\right)+x(1-2\alpha+x)+\frac{\eps}{2}\right).$$
		Finally, $|G|=|\M{n,d}|\stackrel{(\ref{eq:|E(G)|})}{=}d\frac{n^2}{2^{d+1}}$ and so by (\ref{def:f_d}) we deduce that 
		\begin{equation}\label{eq:recursive}
		\fdl{d}{\alpha}(n)\leq \max_{x\in \mathbb{R}}\left\{\frac{1}{2}\left(\fdl{d-1}{x}\left(\frac{n}{2}\right)+\fdl{d-1}{2\alpha-x}\left(\frac{n}{2}\right)\right)+x(1-2\alpha+x)+\frac{\eps}{2}\right\}.
		\end{equation}
		
		The last inequality allows us to incorporate induction hypothesis. In particular, by induction hypothesis we have 
		\begin{align*}
		\fdl{d-1}{x}(\frac{n}{2})&\leq (d+2)(x-x^2)+\frac{1}{4}\sum_{i=3}^{d}\frac{1}{i}+\frac{(d-1)\eps}{2},\\
		\fdl{d-1}{2\alpha -x}(\frac{n}{2})&\leq (d+2)(2\alpha-x)(1-2\alpha+x)+\frac{1}{4}\sum_{i=3}^{d}\frac{1}{i}+\frac{(d-1)\eps}{2},
		\end{align*}
		and these two inequalities together with (\ref{eq:recursive}), after some simple but tedious algebraic manipulations yield
		\begin{equation*}
		\fdl{d}{\alpha}(n)\leq \max_{x\in \mathbb{R}}\left\{-(d+1)x^2+(1+2\alpha(d+1))x+(d+2)(\alpha-2\alpha^2)+\frac{1}{4}\sum_{i=3}^{d}\frac{1}{i}+\frac{d\eps}{2}\right\}.
		\end{equation*}
		In other words $\fdl{d}{\alpha}(n)\leq \max_{x\in \mathbb{R}}\{g(x)\},$ where $g(x)=ax^2+bx+c$ with $a=-(d+1)$. Since $a<0$ we have $\max_{x\in \mathbb{R}} g(x)=g(\frac{-b}{2a})=c-\frac{b^2}{4a}$. Therefore after another set of algebraic manipulations we obtain
		$$\fdl{d}{\alpha}(n)\leq \max_{x\in \mathbb{R}}\{g(x)\}\leq  (d+3)(\alpha-\alpha^2)+\frac{1}{4}\sum_{i=3}^{d+1}\frac{1}{i}+\frac{d\eps}{2},$$
		finishing the proof of the inductive step and Claim~\ref{claim:main}. 
	\end{proof}

	\section{Infinite graphs}\label{sec:infinite}
	Recall that Theorem~\ref{thm:main} states 
	\begin{equation}\label{eq:subfin}
	\displaystyle \lim_{n\to\infty} \min\left\{\frac{\alpha(\Sh{n}{2}[G])}{|G|}\;:\;\emptyset\neq G \subseteq V(\Sh{n}{2})\right\}=\frac{1}{4}.
	\end{equation}
	On the other hand, considering $I=\{(i,j) \;:\; 1\leq i\leq \frac{n}{2}<j \leq n\}$ we clearly have $\alpha(\Sh{n}{2})\geq \lfloor \frac{n^2}{4} \rfloor$. Moreover  $\lfloor\frac{n^2}{4}\rfloor$  is optimal, since any graph $G\subseteq V(\Sh{n}{2})$ with $|G|\geq \lfloor\frac{n^2}{4}\rfloor+1$ contains a triangle and hence such $G$ is not an independent set in $\Sh{n}{2}$. Therefore,
	\begin{equation}\label{eq:Knfin}
	\lim_{n \to \infty}\frac{\alpha(\Sh{n}{2})}{|\Sh{n}{2}|}=\frac{1}{2}.
	\end{equation}
	
	It may be interesting to note that infinite version of~(\ref{eq:Knfin}) was considered by Czipszer, Erd\H{o}s and Hajnal~\cite{CEH} who proved that if $I$ is independent set in countable shift graph $\Sh{\N}{2}$, then the density of $I$ does not exceed $1/4$, i.e.
	\begin{equation}\label{eq:Kninf}
	\liminf_{n \to \infty}\frac{\left|I\cap \binom{[n]}{2}\right|}{\binom{n}{2}}\leq\frac{1}{4}.
	\end{equation}
	(Here $\frac{1}{4}$ is clearly optimal, since $I=\{(i,j) \;:\; i<j,\; i \text{ odd, } j \text{ even}\}$ is independent in $\Sh{\N}{2}$.)
	
	To complete this discussion we provide an infinite variant of~(\ref{eq:subfin}). 
	\begin{theorem}\label{thm:tree}
		There is $G\subseteq V(\Sh{\N}{2})$ such that if $I$ is an independent set in $\Sh{\N}{2}[G]$, then 
		$$\liminf_{n\to\infty}\frac{\left|I\cap \binom{[n]}{2}\right|}{\left|G\cap \binom{[n]}{2}\right|}=0.$$
	\end{theorem}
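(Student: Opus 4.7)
The plan is to produce $G\subseteq V(\Sh{\N}{2})$ by an explicit rapidly-growing block construction and to reduce the $\liminf$ statement to a statement about $2$-colorings of $\N$. As in the proof of Claim~\ref{claim:lb}, every independent set $I$ in $\Sh{\N}{2}[G]$ satisfies $I\subseteq G_c:=\{(i,j)\in G: i<j,\ c(i)=b,\ c(j)=r\}$ for some coloring $c:\N\to\{r,b\}$, so it suffices to construct $G$ with
\[
\liminf_{n\to\infty}\,\frac{|G_c\cap\binom{[n]}{2}|}{|G\cap\binom{[n]}{2}|}\,=\,0\quad\text{for every } c.
\]

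I would build $G$ in blocks. Fix a very fast-growing sequence $n_0<n_1<n_2<\cdots$ (for instance $n_k=2^{2^k}$), let $V_k=(n_{k-1},n_k]$, and on each $V_k$ place a graph $H_k$ whose edge count dominates the total of all earlier blocks, so that at time $n_k$ the density equals $(1+o(1))|G_c\cap H_k|/|H_k|$. Each $H_k$ is to be assembled from complete bipartite pieces of the form $K_{A,B}$ with $A<B$ inside $V_k$, and the family of pieces is varied with $k$ so that each $H_k$ has a distinguished ``optimal'' coloring $c_k^\ast$ of $V_k$ achieving maximum density on $H_k$, with the $c_k^\ast$ rotating fast enough across $k$ that no single infinite $c:\N\to\{r,b\}$ can remain optimal for $H_k$ at infinitely many scales. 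A few sparse connector edges between consecutive blocks can be used to enforce this incompatibility without affecting the asymptotic edge counts.

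Given an arbitrary $c$, a pigeonhole or diagonalization argument applied to the restrictions $c|_{V_k}$ then identifies an infinite subsequence $(k_j)$ along which $c|_{V_{k_j}}$ is far from $c_{k_j}^\ast$. The design of $H_{k_j}$ is used to upgrade this mismatch to the quantitative statement $|G_c\cap H_{k_j}|/|H_{k_j}|\to 0$, and combined with the block-domination in the previous paragraph this yields the desired $\liminf=0$.

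The main obstacle is this final quantitative step. Claim~\ref{claim:lb} guarantees that any finite $H_k$ admits \emph{some} coloring of $V_k$ with density at least $1/4$, so merely being suboptimal for $c_k^\ast$ is not enough to drive the density below $1/4$, let alone to $o(1)$. The design of $H_k$ as a union of many bipartite pieces is precisely what should allow such an amplification: any coloring deviating substantially from $c_k^\ast$ should simultaneously mismatch many pieces, so that the losses on different pieces compound. Specifying the sizes of the pieces, their arrangement inside $V_k$, and the manner in which the $c_k^\ast$ rotate across $k$ so that this compounding actually drives the ratio to $o(1)$ is the technical heart of the argument.
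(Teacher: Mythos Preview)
Your reduction to $2$-colourings is correct and is essentially what the paper uses implicitly. The gap is in the block construction itself: because the intervals $V_k=(n_{k-1},n_k]$ are pairwise disjoint, the adversary can simply set $c\restriction_{V_k}:=c_k^\ast$ for \emph{every} $k$ simultaneously. This produces a single infinite colouring that is optimal on every block, so your pigeonhole step --- ``no single $c$ can remain optimal for $H_k$ at infinitely many scales'' --- is false as stated. Sparse connector edges do not repair this: a colouring is not constrained by the presence of edges, only its density is affected, and you yourself stipulate that the connectors are negligible in the edge count. With this adversarial $c$ one gets $|G_c\cap H_k|\ge\tfrac14|H_k|$ for all $k$, and along the subsequence $n_k$ the ratio is at least $\tfrac14-o(1)$. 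To rescue the approach one would need each $H_k$ \emph{by itself} to have the property that every colouring of $V_k$ admits some prefix with $o(1)$ density --- but that is exactly the theorem you are trying to prove, so the block superstructure contributes nothing.

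The paper's construction is quite different and avoids this circularity. It takes $G$ to be an infinite rooted tree on $\N$, with vertices ordered level by level, where the number of children of each vertex $v$ grows so fast that $|N^+(v)|$ exceeds $2^{j}$ times the total number of edges seen before $v$ (here $j$ is the level of $v$). The point is that every vertex is simultaneously a child and a parent: if some edge of $I$ enters $v$ (so $c(v)=r$ in your language), then \emph{none} of the enormously many edges leaving $v$ can lie in $I$. Taking $n$ to be the last child of $v$, the edges from $v$ dominate $G\cap\binom{[n]}{2}$ while contributing nothing to $I$, giving ratio $\le 2^{-j}$. Since an infinite $I$ must contain edges at arbitrarily deep levels, the $\liminf$ is $0$. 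What makes this work --- and what your disjoint blocks lack --- is precisely that the ``blocks'' (levels) share vertices, so a colour committed at one scale forces a loss at the next.
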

	\begin{proof}
		Consider an infinite ordered tree $G$ with $V(G)=\mathbb{N}$, and with  vertices labeled $v_{i}^j$, where $j$ denotes the ``level'' $L_j$ that vertex $v_{i}^j$ belongs to and $i$ denotes the order in which vertices are listed on the level.
		
		Consider a labeling of vertices of $G$ by integers satisfying $v_{i}^j<v_{i}^{j^\prime}$ if $j<j^\prime$ and $v_{i}^j<v_{i^\prime}^{j}$ if $i<i^\prime$ such that for all $v_{i}^j$ the finite set $N^{+}(v_{i}^j)$ of all children of $v_i^j$ forms an interval (and these intervals on the level $L_{j+1}$ follow the order of their parents on $L_j$, see Figure~\ref{fig:2}). Finally we will assume that for all $v_{i}^j$
		\begin{equation}\label{eq:Nfv}
		|N^{+}(v_i^{j})|\geq 2^{j}\sum_{v<v_{i}^j}|N^+(v)|.
		\end{equation}
		Now, let $I\subseteq G$ be an infinite independent set in $\Sh{\mathbb{N}}{2}$ and let $(v_{k}^{j-1}, v_{i}^j)\in I$, where $v_{k}^{j-1}$ and $v_{i}^j$ are parent and child respectively. Let $w=\max\{N^{+}(v_{i}^j)\}$ be the largest son of $v_{i}^j$ and let $W=\{1,\ldots, w\}$ (see Figure~\ref{fig:2}). Then 
		\begin{equation}\label{eq:G[W]}
		G[W]=\bigcup_{v\leq v_i^{j}}\left\{(v,u)\;:\; u\in N^{+}(v)\right\}.
		\end{equation}
		\begin{figure}[H]
			\begin{center}
				\begin{tikzpicture}[line cap=round,line join=round,>=triangle 45,x=1.35cm,y=1.5cm, scale=1]
				\clip(-5.7,-0.85) rectangle (6.3,3.3);
				
				\draw[color=blue, line width=10, opacity=0.25] (-0.2,3)--(0.2,3);
				
				\draw[color=red] (0,3)--(-1,2);
				\draw[color=gray] (0,3)--(1,2);
				\draw[color=blue, line width=10, opacity=0.25] (-1.2,2)--(1.2,2);
				
				\foreach \a in {-2,-1.5,-1,-0.5}
				\draw[color=gray] (\a,1)--(-1,2);
				\foreach \a in {0,2}
				\draw[color=red] (\a,1)--(1,2);
				\foreach \a in {0.2,0.4,...,1.8}
				\draw[color=red, line width=0.25pt] (\a,1)--(1,2) ;
				
				\def\x{-5}
				
				\draw[color=red] (\x,0)--(-2,1);
				\foreach \a in {0.1,0.2,...,0.9}
				\draw[color=red, line width=0.25] (\x+\a,0)--(-2,1);
				\draw[color=red] (\x+1,0)--(-2,1);
				
				\def\x{-3.5}
				
				\draw[color=gray] (\x,0)--(-1.5,1);
				\foreach \a in {0.1,0.2}
				\draw[color=gray, line width=0.25] (\x+\a,0)--(-1.5,1);
				\foreach \a in {0.3,0.4,...,0.8}
				\draw[color=red, line width=0.25] (\x+\a,0)--(-1.5,1);
				\foreach \a in {0.9,1.0,...,1.1}
				\draw[color=gray, line width=0.25] (\x+\a,0)--(-1.5,1);
				\draw[color=gray] (\x+1.2,0)--(-1.5,1);
				
				\draw[color=gray] (\x+3,0)--(0,1);
				\foreach \a in {0.1,0.2,...,1.9}
				\draw[color=gray, line width=0.25] (3+\x+\a,0)--(0,1);
				\draw[color=gray] (\x+5,0)--(0,1);
				\draw[color=black] (\x+4,0) ellipse (1.2 and 0.15);
				
				\draw[color=gray] (\x+6.8,0)--(2,1);
				\foreach \a in {0.1,0.2,...,2.5}
				\draw[color=gray, line width=0.25] (6.8+\x+\a,0)--(2,1);
				\draw[color=gray] (\x+9.4,0)--(2,1);
				
				\draw[color=black, fill=black] (0,3) circle (2pt);
				\draw[color=blue, opacity=0.5] (-0.5,3) circle (0pt) node {$L_0$};

				\draw[color=black, fill=black] (-1,2) circle (2pt);
				\draw[color=black, fill=black] (1,2) circle (2pt);
				\draw[color=black] (1,2)++(0.5,0.2) circle (0pt) node {$v_{k}^{j-1}$};
				\draw[color=blue, opacity=0.5] (-1.5,2) circle (0pt) node {$L_1$};
				\foreach \a in {-2,-1.5,...,-0.5}
				\draw[color=black, fill=black] (\a,1) circle (2pt);
				
				\draw[color=black, fill=black] (0,1) circle (2pt);
				\draw[color=black] (0,1)++(-0.1,0.3) circle (0pt) node {$v_{i}^{j}$};
				\foreach \a in {0.2,0.4,...,1.8}
				\draw[color=black, fill=black] (\a,1) circle (0.5pt);
				\draw[color=black, fill=black] (2,1) circle (2pt);
				
				\draw[color=blue, line width=10, opacity=0.25] (-2.2,1)--(2.2,1);
				\draw[color=blue, opacity=0.5] (-2.5,1) circle (0pt) node {$L_2$};

				\def\x{-5}
				\draw[color=black, fill=black] (\x,0) circle (2pt);
				\foreach \a in {0.1,0.2,...,0.9}
				\draw[color=black, fill=black] (\x+\a,0) circle (0.5pt);
				\draw[color=black, fill=black] (\x+1,0) circle (2pt);
				
				\def\x{-3.5}
				\draw[color=black, fill=black] (\x,0) circle (2pt);
				\foreach \a in {0.1,0.2,...,1.1}
				\draw[color=black, fill=black] (\x+\a,0) circle (0.5pt);
				\draw[color=black, fill=black] (\x+1.2,0) circle (2pt);
				
				\foreach \a in {0.65,0.85,1.05}
				\draw[color=black, fill=black] (1.2+\x+\a,0) circle (0.5pt);
				\foreach \a in {0.65,0.85,1.05}
				\draw[color=black, fill=black] ({0.5*(1.2+\x+\a)-0.4},0.5) circle (0.5pt);			
				
				\draw[color=black, fill=black] (\x+3,0) circle (2pt);
				\foreach \a in {0.1,0.2,...,1.9}
				\draw[color=black, fill=black] (3+\x+\a,0) circle (0.5pt);
				\draw[color=black, fill=black] (\x+5,0) circle (2pt);
				
				\draw[color=black] (\x+5,0)++(0,0.2) circle (0pt) node {$w$};
				\draw[color=black] (\x+4.5,0)++(0,-0.4) circle (0pt) node {$N^{+}(v_{i}^j)$};

				\foreach \a in {0.65,0.85,1.05}
				\draw[color=black, fill=black] (5+\x+\a,0) circle (0.5pt);
				\foreach \a in {0.65,0.85,1.05}
				\draw[color=black, fill=black] ({1*(5+\x+\a)-0.75},0.5) circle (0.5pt);			
				
				\draw[color=black, fill=black] (\x+6.8,0) circle (2pt);
				\foreach \a in {0.1,0.2,...,2.5}
				\draw[color=black, fill=black] (6.8+\x+\a,0) circle (0.5pt);
				\draw[color=black, fill=black] (\x+9.4,0) circle (2pt);					
				
				\draw[color=blue, line width=10, opacity=0.25] (-5.2,0)--(6.1,0);
				\draw[color=blue, opacity=0.5] (-5.5,0) circle (0pt) node {$L_3$};
				
				\foreach \a in {0.2,0.4,...,0.6}
				\draw[color=black, fill=black] (-3,-0.2-\a) circle (0.5pt);
				\foreach \a in {0.2,0.4,...,0.6}
				\draw[color=black, fill=black] (0,-0.2-\a) circle (0.5pt);
				\foreach \a in {0.2,0.4,...,0.6}
				\draw[color=black, fill=black] (3,-0.2-\a) circle (0.5pt);
				
				\def\a{0.2}
				\draw[color=green, line width=1, opacity=0.5] (\a,3+\a)--({-3*\a},3+\a)--({-5-1.6*\a},\a)--({-5-1.6*\a},0-\a)--(1.5,0-\a)--({2+2*\a},1)--cycle;
				\draw[color=green] (-3,1.5+0.5) circle (0pt) node {$W$};
				\end{tikzpicture}
			\end{center}
			\caption{Infinite tree $G$, vertices are ordered top to bottom, left to right. Edges of $I$ are labeled with red.}\label{fig:2}
		\end{figure}
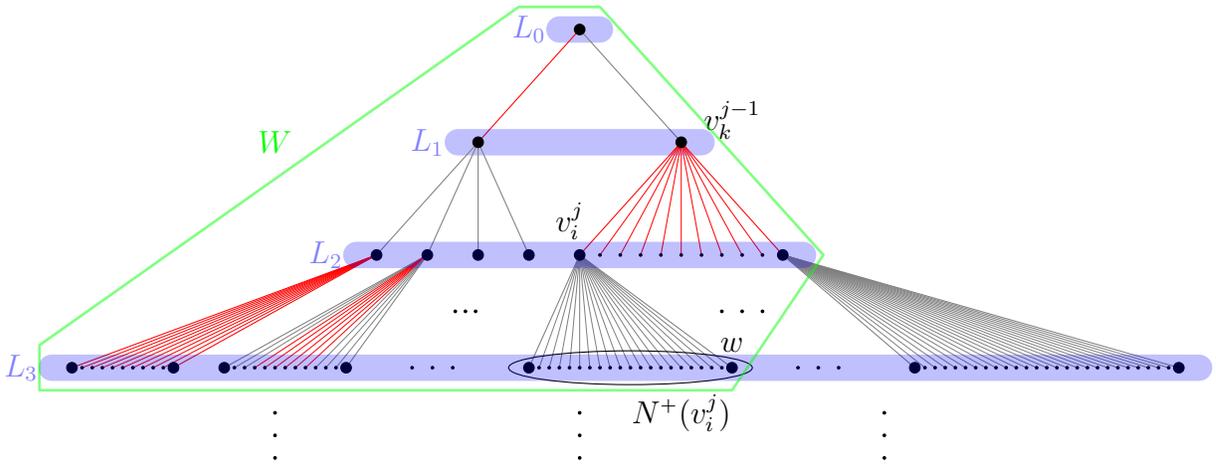
		In particular in view of~(\ref{eq:Nfv})
		\begin{equation}\label{eq:G[W].lb}
		|G[W]|\geq |\left\{(v_{i}^{j},u)\;:\; u\in N^{+}(v)\right\}|=|N^{+}(v_{i}^j)|.
		\end{equation}
		On other hand, since $(v_{k}^{j-1}, v_{i}^j)\in I$ and  $I$ is independent set in $\Sh{\mathbb{N}}{2}$, the set $I$ does not contain any other edge incident to $v_i^j$. Consequently,
		\begin{equation}\label{eq:I[W].ub}
		|I[W]|\leq \left|\bigcup_{v< v_i^{j}}\left\{(v,u)\;:\; u\in N^{+}(v)\right\}\right|\stackrel{(\ref{eq:Nfv})}{\leq}2^{-j}|N^{+}(v_{i}^j)|.
		\end{equation} 
		In view of~(\ref{eq:G[W].lb}) and~(\ref{eq:I[W].ub}) we have $|I[W]|/|G[W]|\leq \frac{1}{2^j}$. Now, since $I$ is infinite there are edges $(v_{k}^{j-1}, v_{i}^j)\in I$ with sufficiently large $j$, hence the ratio $|I[W]|/|G[W]|$ can be made arbitrary small, finishing the proof.
	\end{proof}
	\section{Concluding remarks}	
	In \cite{EHS} it was proved\footnote{the result follows from the proof of Theorem 1 in~\cite{EHS}} that for any $n, k$
	\begin{equation}\label{eq:EHS}
	\alpha_{n}^k\geq \begin{cases}
	\frac{1}{2}-\frac{1}{k}, \text{ if } k \text{ is even},\\
	\frac{1}{2}-\frac{1}{2k}, \text{ if } k \text{ is odd}.
	\end{cases}
	\end{equation}
	It remains an open problem to determine for any $k\geq 3$ the exact value of $\lim_{n\to \infty}	\alpha_{n}^{k}.$ For $k=4$ we were able to improve the constant in the lower bound~(\ref{eq:EHS}) from $\frac{1}{4}$ to $\frac{3}{8}$\footnote{$\alpha(\Sh{n}{4}[G])\geq \frac{3}{8}|G|$ can be proved by considering a random colouring $c:[n]\to\{0,1\}$ and forming an independent set in $\Sh{n}{4}$ by taking hyperedges of $G$ of form $1000$, $1110$, or $x01y$ for some $x,y\in\{0,1\}$.}  and for $k=3$ we believe that estimate in~(\ref{eq:EHS}) is sharp.
	\begin{problem}
		Show that 
		$\lim_{n\to \infty}\alpha_{n}^{3}=\frac{1}{3}.$
	\end{problem}
	
	Finally, all of the results in this paper can be reformulated in terms of subgraphs with no increasing paths of length two. For instance, Theorem~\ref{thm:main} implies that for any $\eps>0$ there exists an vertex-ordered graph $G$ such that if $G'\subseteq G$ with $|G'|\geq \left(\frac{1}{4}+\eps\right)|G|$, then $G'$ contains an increasing path of length two, i.e. there are $i<j<k$ with $(i,j), (j,k)\in G'$. One can ask similar questions for longer increasing paths.
	
	\begin{problem}\label{prob:I3}
		For any $\eps>0$ does there exist an ordered graph $G$ such that if $G'\subseteq G$ with $|G'|\geq \left(\frac{1}{3}+\eps\right)|G|$, then $G'$ contains an increasing path of length three?
	\end{problem}
	Note that in regards to Problem~\ref{prob:I3}, one can consider a random coloring $c$ of $V(G)$ with colors $\{0,1,2\}$ and define $G'$ to be the collection of all $(i,j)\in E(G)$ with $i<j$ and $c(i)<c(j)$. Then such $G'$ on average contains $\frac{1}{3}|G|$ edges and has no increasing paths of length three, motivating constant $\frac{1}{3}$ in the problem.
	
	\begin{bibdiv}
		\begin{biblist}
			\bib{AKRR}{article}{
				author={Avart, C.},
				author={Kay, B.},
				author={Reiher, C.},
				author={R\"{o}dl, V.},
				title={\emph{The chromatic number of finite type-graphs}},
				journal={\emph{J. Combin. Theory Ser. B}},
				volume={122},
				date={2017},
				pages={877--896},
				doi={10.1016/j.jctb.2016.10.004},
			}	
			
			\bib{CEH}{article}{
				author={Czipszer, J.},
				author={Erd\H{o}s, P.},
				author={Hajnal, A.},
				title={\emph{Some extremal problems on infinite graphs}},
				language={Russian},
				journal={\emph{Magyar Tud. Akad. Mat. Kutat\'{o} Int. K\"{o}zl.}},
				volume={7},
				date={1962},
				pages={441--457},
			}	
			\bib{EH68}{article}{
				author={Erd\H{o}s, P.},
				author={Hajnal, A.},
				title={\emph{On chromatic number of infinite graphs}},
				conference={
					title={\emph{Theory of Graphs}},
					address={Proc. Colloq., Tihany},
					date={1966},
				},
				book={
					publisher={Academic Press, New York},
				},
				date={1968},
				pages={83--98},
			}	
			\bib{EH}{article}{
				author={Erd\H{o}s, Paul},
				author={Hajnal, A.},
				title={\emph{Some remarks on set theory. IX. Combinatorial problems in measure
						theory and set theory}},
				journal={\emph{Michigan Math. J.}},
				volume={11},
				date={1964},
				pages={107--127},
			}	
			
			\bib{EHS}{article}{
				author={Erd\H{o}s, P.},
				author={Hajnal, A.},
				author={Szemer\'{e}di, E.},
				title={\emph{On almost bipartite large chromatic graphs}},
				conference={
					title={\emph{Theory and practice of combinatorics}},
				},
				book={
					series={North-Holland Math. Stud.},
					volume={60},
					publisher={North-Holland, Amsterdam},
				},
				date={1982},
				pages={117--123},
			}
			
			\bib{EtH}{article}{
				author={Harner, C. C.},
				author={Entringer, R. C.},
				title={\emph{Arc colorings of digraphs}},
				journal={\emph{J. Combinatorial Theory Ser. B}},
				volume={13},
				date={1972},
				pages={219--225},
				doi={10.1016/0095-8956(72)90057-3},
			}
			
			\bib{JLR}{book}{
				author={Janson, Svante},
				author={\L uczak, Tomasz},
				author={Rucinski, Andrzej},
				title={\emph{Random graphs}},
				series={Wiley-Interscience Series in Discrete Mathematics and
					Optimization},
				publisher={Wiley-Interscience, New York},
				date={2000},
				pages={xii+333},
				doi={10.1002/9781118032718},
			}

		\end{biblist}
	\end{bibdiv}
	
	\section*{Appendix}\label{sec:appendix}
	\begin{proof}[Proof of Claim~\ref{claim:bip}]
		Let $\Bnd=G$, where $G$ is a random graph between $\S$ and $\L$ obtained by selecting a random subset of size $\frac{n^2}{2^{d+1}}$ without replacement from $K_{\S,\L}$ (complete bipartite graph between $\S$ and $\L$). Then $G$ satisfies (i) and we will show that $G$ satisfies (ii) almost surely.
		
		For every $X\subseteq \S$ and $Y\subseteq \L$, $e(X,Y)=e_G(X,Y)$ is distributed as a hypergeometric random variable $\text{H}\left(\frac{n^2}{4}, \frac{n^2}{2^{d+1}},|X||Y|\right)$ with expectation $\frac{1}{2^{d-1}}|X||Y|$. Let $B_{X,Y}$ be the event that $$\left|e_{G}(X,Y)-\frac{1}{2^{d-1}}|X||Y|\right|>\frac{\eps n^2}{2^{d+2}},$$ i.e., $B_{X,Y}$ is the event that (ii) fails for given $X$ and $Y$. 
		
		We will use a concentration inequality for hypergeometric random variables (this version is a corollary of Theorem 2.10 and inequalities (2.5),(2.6) of Janson, \L uczak, Rucinski~\cite{JLR}).
		\begin{theorem}\label{thm:Chernoff}
			Let $Z\sim \text{H}(N,m,k)$ be a hypergeometric random variable with the expectation $\mu=\frac{mk}{N}$, then for $t\geq 0$
			$$\mathbb{P}(|Z-\mu|> t)\leq 2 \exp\left(\frac{-t^{2}}{2(\mu+t/3)}\right).$$
		\end{theorem}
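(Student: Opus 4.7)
The plan is to follow the classical Cram\'er--Chernoff exponential moment method, using Hoeffding's convex-function comparison between the hypergeometric and binomial distributions to carry out the MGF computation with independent Bernoullis, and then converting the resulting Chernoff rate function into its Bernstein-style form.

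First I would invoke Hoeffding's theorem: for $Z \sim \text{H}(N,m,k)$ and $W \sim \text{Bin}(k,p)$ with $p = m/N$, one has $\mathbb{E}[\phi(Z)] \le \mathbb{E}[\phi(W)]$ for every continuous convex function $\phi$. Applied to $\phi(z) = e^{\lambda z}$ (convex for every $\lambda \in \mathbb{R}$), this dominates the MGF of $Z$ by that of $W$, which factors as $(1 + p(e^{\lambda}-1))^{k} \le \exp(\mu(e^{\lambda} - 1))$ using $1+x \le e^{x}$.

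For the upper tail, Markov's inequality with $\lambda > 0$ then gives
\[
\mathbb{P}(Z - \mu \ge t) \;\le\; \exp\bigl(-\lambda(\mu+t) + \mu(e^{\lambda}-1)\bigr),
\]
and optimizing in $\lambda$ via $e^{\lambda}=1+t/\mu$ produces the Chernoff bound $\exp(-\mu\, h(t/\mu))$, where $h(x) = (1+x)\ln(1+x) - x$. To convert this into the claimed Bernstein form I would verify the elementary calculus inequality
\[
h(x) \;\ge\; \frac{x^{2}}{2 + 2x/3} \qquad (x \ge 0),
\]
which follows by comparing Taylor coefficients or by differentiating both sides twice. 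Substituting $x = t/\mu$ yields $\mathbb{P}(Z-\mu \ge t) \le \exp(-t^{2}/(2(\mu+t/3)))$. For the lower tail, the same scheme with $\lambda < 0$ produces $\mathbb{P}(Z - \mu \le -t) \le \exp(-\mu\, h(-t/\mu))$, and since $h(-x) \ge x^{2}/2$ on $[0,1]$, the resulting bound is in fact strictly stronger than the desired envelope $\exp(-t^{2}/(2(\mu+t/3)))$. A union bound across the two tails then contributes the factor of $2$.

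The main obstacle is Hoeffding's comparison itself, which is standard but nontrivial. Writing $Z$ as a sum of exchangeable (but dependent) indicators corresponding to $k$ draws without replacement, Hoeffding's classical coupling represents $Z$ as a conditional expectation $\mathbb{E}[W \mid \mathcal{F}]$ of a binomial $W$ with respect to a suitable $\sigma$-algebra, so that Jensen's inequality delivers $\mathbb{E}[\phi(Z)] \le \mathbb{E}[\phi(W)]$ for every convex $\phi$. In a fully self-contained exposition I would reproduce this coupling; otherwise one may quote Hoeffding's 1963 inequality directly, as the appendix already does through its reference to Theorem~2.10 and inequalities~(2.5),(2.6) of \cite{JLR}.
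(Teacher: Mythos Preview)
The paper does not prove this theorem; it merely cites it as a corollary of Theorem~2.10 together with inequalities~(2.5) and~(2.6) of \cite{JLR}. Your proposal is correct and is precisely an unpacking of that citation: Theorem~2.10 in \cite{JLR} is Hoeffding's convex-order comparison reducing the hypergeometric MGF to the binomial one, and (2.5)--(2.6) there are exactly the Chernoff/Bernstein-form bounds you derive via $h(x)\ge x^2/(2+2x/3)$, so there is nothing to add.
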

		
		For a given $X\subseteq L$ and $Y\subseteq R$, as a consequence of Theorem~\ref{thm:Chernoff} with $Z=e_{G}(X,Y)$, $t=\frac{\eps n^2}{2^{d+2}}$ and $\mu=\frac{1}{2^{d-1}}|X||Y|\leq \frac{n^2}{2^{d+1}}$ we get $$\mathbb{P}(B_{X,Y})=e^{-\Omega(n^2)},$$
		where constant in $\Omega()$ term depends on $\eps$ and $d$ only.
		Therefore,
		$$\mathbb{P}\left(\bigcup_{X,Y}B_{X,Y}\right)\leq \sum_{X,Y}\mathbb{P}\left(B_{X,Y}\right)\leq 2^ne^{-\Omega(n^2)}=o(1).$$
		In particular, $\mathbb{P}(G \text{ satisfies (ii)})=\mathbb{P}\left(\bigcap_{X,Y}\overline{B_{X,Y}}\right)=1-o(1)$. Hence, $G$ almost surely satisfies (ii).
	\end{proof}
	
\end{document}